\newtheorem{theorem}{Theorem}
\newtheorem{lemma}[theorem]{Lemma}
\newtheorem{thm}[theorem]{Theorem}
\newtheorem{proposition}[theorem]{Proposition}
\theoremstyle{definition}
\newtheorem{definition}[theorem]{Definition}
\newtheorem{problems}[theorem]{Problems} 
\newtheorem{notations}[theorem]{Notations}
\theoremstyle{remark}
\newtheorem{remark}[theorem]{Remark}
\DeclareMathOperator{\cf}{cf}
\DeclareMathOperator{\CAP}{CAP}
\newcommand{\m}{\mathfrak}
\title[Combinatorial principles, compactness of spaces VI]
{Combinatorial and model-theoretical principles related to 
regularity of ultrafilters and compactness of topological spaces. VI.}
\author[]{Paolo Lipparini} 
\address{Dipartito di Matematica\\
Viale della Ricerca Scientifica\\
II Unidiversit\`a Roma (Tor Vergata)\\
I-00133 ROME 
ITALY}
\urladdr{http://www.mat.uniroma2.it/\textasciitilde lipparin}
\thanks{The author has received support from MPI and GNSAGA.
We wish to express our gratitude to X. Caicedo for stimulating discussions and correspondence} 
\keywords{Complete accumulation points, $\CAP_ \lambda $, compactness of products of topological spaces; $\nu$-complete, uniform, regular, $\mu$-decomposable ultrafilters; elementary extensions of cardinals with additional structure; $\mu$-non standard elements; infinite matrices combinatorics;  compactness  of abstract logics.} 
\subjclass[2000]{Primary 03C20, 54D20, 03E05, 03C95, 03E75;
Secondary 54B10, 54A35, 54F05, 03C55, 03C98}
\begin{document} 

\begin{abstract} 
We discuss the existence of complete accumulation points of sequences 
in products of topological spaces. Then we collect and generalize
many of the results proved in Parts I, II and IV.

The present Part VI is complementary to Part V to the effect that
here we deal, say, with uniformity, complete accumulation points and 
$ \kappa $-$( \lambda )$-compactness,
rather than with regularity, $[ \lambda , \mu ]$-compactness and
$ \kappa $-$  ( \lambda , \mu  )$-compactness.
Of course, if we restrict ourselves to regular cardinals, Parts V 
(for $ \lambda = \mu$) and Part VI
essentially coincide.
\end{abstract}

\maketitle 

See Parts I - IV \cite{parti}, or 
\cite{BF,C,CK,CN,KM,KV,easter,jsl,bumi,arxiv,topproc,topappl,sa,VLNM}
for unexplained notation.

Let us recall the definition of the $\Box^ {<\nu} $ product.
If  $ \nu$ is a cardinal, and  
$(X_ \beta ) _{ \beta \in \kappa }$ is a family of topological spaces, 
then the
$\Box^ {<\nu} $ topology on the cartesian product
$\prod_{ \beta \in \kappa } X_ \beta  $
 is  the topology a base of which
is given by all products $ \prod _{ \beta \in \kappa }Y_ \beta $,
where each $Y_ \beta $ is an open subset of $X_ \beta $,
and $| \{  \beta \in \kappa | Y_ \beta \not= X_ \beta \}| < \nu$.   
The product of $(X_ \beta ) _{ \beta \in \kappa }$
with the $\Box^ {<\nu} $ topology shall be denoted by 
$\Box^{<\nu}_{ \beta \in \kappa } X_ \beta $.

Notice that in the case $\nu= \omega  $ we get the more usual
Tychonoff product. As usual, we shall denote the
Tychonoff product by $ \prod_{\beta\in\kappa } X_\beta    $.

Recall that, for every infinite cardinal $ \lambda $, a topological space 
$X$ is said to satisfy $\textrm{CAP}_\lambda $ if and only if 
every subset $Y\subseteq X$ with $|Y|=\lambda $ has a complete
accumulation point in $X$.

Actually, in what follows we shall use the variation of $\textrm{CAP}_\lambda $
which deals with sequences, rather than subsets.

If $ \lambda $ is an infinite cardinal, 
$X$ is a topological space 
and $(x_ \alpha )_{\alpha < \lambda } $
is a sequence of elements of $X$, 
we say that $x \in X$ is a $\lambda $-\emph{accumulation point}
for  $(x_ \alpha )_{\alpha < \lambda } $ if and only if 
$ |\{ \alpha < \lambda | x_ \alpha \in U \}|=  \lambda $,
for every neighborhood $U$ of $x$.

We say that a topological space 
$X$ satisfies $\textrm{CAP}^*_\lambda $ if and only if 
 every $ \lambda $-indexed sequence $(x_ \alpha )_{\alpha < \lambda } $
 of elements of $X$ has a $\lambda $-accumulation point.

\begin{proposition}\label{capcap}
(a) If $ \lambda $ is a regular cardinal, then $\textrm{CAP}^*_\lambda $ is equivalent to  $\textrm{CAP}_\lambda $, and equivalent
to $[\lambda,\lambda ]$-compactness.

(b) If $ \lambda $ is a singular cardinal, then $\textrm{CAP}^*_\lambda $ is equivalent to the conjunction of $\textrm{CAP}_\lambda $ and $\CAP_{\cf\lambda} $.
 \end{proposition}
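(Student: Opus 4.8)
The plan is to reduce everything to a pigeonhole analysis of the \emph{fibers} of a $\lambda$-sequence. Given $(x_\alpha)_{\alpha<\lambda}$, set $F_r=\{\alpha<\lambda : x_\alpha=r\}$ for $r$ in the range $R=\{x_\alpha:\alpha<\lambda\}$, so that $\lambda=\sum_{r\in R}|F_r|$. One implication is uniform across all $\lambda$: the implication $\CAP^*_\lambda\Rightarrow\CAP_\lambda$ follows by enumerating any $Y$ with $|Y|=\lambda$ injectively as a sequence, since a $\lambda$-accumulation point of an injective sequence is exactly a complete accumulation point of its range. Likewise, I expect $\CAP^*_\lambda\Rightarrow[\lambda,\lambda]$-compactness to hold with no regularity assumption, by diagonalizing: from a cover $\{U_\beta:\beta<\lambda\}$ with no subcover of size $<\lambda$ pick $x_\gamma\notin\bigcup_{\beta<\gamma}U_\beta$, and observe that a $\lambda$-accumulation point $x\in U_{\beta_0}$ would force $|\{\gamma:x_\gamma\in U_{\beta_0}\}|=\lambda$, contradicting $\{\gamma:x_\gamma\in U_{\beta_0}\}\subseteq\beta_0+1$.

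For part (a) the remaining content is the two reverse implications, and this is where $\lambda$ regular is used. For $\CAP_\lambda\Rightarrow\CAP^*_\lambda$ I would argue by cases on the fibers: if some $|F_r|=\lambda$ then $r$ is already a $\lambda$-accumulation point; if $|R|=\lambda$ then a complete accumulation point of $R$ (given by $\CAP_\lambda$) is a $\lambda$-accumulation point of the sequence. Regularity rules out the remaining possibility, since $|R|<\lambda$ together with all $|F_r|<\lambda$ would exhibit $\lambda$ as a sum of fewer than $\lambda$ cardinals each $<\lambda$. For $[\lambda,\lambda]$-compactness $\Rightarrow\CAP^*_\lambda$ I would apply the closed-set form of compactness to the decreasing family of tail closures $C_\beta=\overline{\{x_\alpha:\alpha\ge\beta\}}$, $\beta<\lambda$: regularity gives these the $<\lambda$-intersection property, so compactness yields $x\in\bigcap_\beta C_\beta$, and then every neighborhood of $x$ meets arbitrarily late tails, hence (again by regularity) catches $\lambda$-many indices.

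For the forward implication of part (b), $\CAP^*_\lambda\Rightarrow\CAP_{\cf\lambda}$, I would manufacture a test sequence. Given $Y=\{y_i:i<\cf\lambda\}$, fix a strictly increasing sequence of cardinals $(\mu_i)_{i<\cf\lambda}$ cofinal in $\lambda$ with $\sum_i\mu_i=\lambda$, and let $y_i$ occur exactly $\mu_i$ times in a sequence $(x_\alpha)_{\alpha<\lambda}$. A $\lambda$-accumulation point $x$ then satisfies $\sum_{i:\,y_i\in U}\mu_i=\lambda$ for every neighborhood $U$; were $|\{i:y_i\in U\}|<\cf\lambda$ this would be a sum of fewer than $\cf\lambda$ cardinals below $\lambda$, hence $<\lambda$, a contradiction. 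Thus $|U\cap Y|=|Y|$ for all $U$ and $x$ is a complete accumulation point of $Y$. Combined with $\CAP^*_\lambda\Rightarrow\CAP_\lambda$ noted above, this gives the whole forward direction.

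The converse of part (b) is the main obstacle. Assume $\CAP_\lambda$ and $\CAP_{\cf\lambda}$ and take any $(x_\alpha)_{\alpha<\lambda}$. The cases ``some $|F_r|=\lambda$'' and ``$|R|=\lambda$'' are handled exactly as in (a), using $\CAP_\lambda$. The genuinely singular case is $|R|<\lambda$ with every $|F_r|<\lambda$; here $\lambda=\sum_{r\in R}|F_r|$ forces $\sup_{r\in R}|F_r|=\lambda$, so the fiber sizes are cofinal in $\lambda$. The delicate point is that it does \emph{not} suffice merely to pick $S\subseteq R$ of size $\cf\lambda$ with $\sum_{r\in S}|F_r|=\lambda$ and invoke $\CAP_{\cf\lambda}$, because a complete accumulation point of $S$ only guarantees that a neighborhood meets $S$ in a set of size $\cf\lambda$, which could discard all the large fibers. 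The fix is to choose $S=\{r_j:j<\cf\lambda\}$ with fiber sizes \emph{strictly increasing and cofinal}, say $|F_{r_j}|\ge\nu_j$ along a strictly increasing cofinal $(\nu_j)_{j<\cf\lambda}$; this is possible precisely because the fiber sizes are cofinal in $\lambda$ and distinct sizes force distinct $r_j$, so in particular $|R|\ge\cf\lambda$. Then \emph{every} subset $T\subseteq S$ with $|T|=\cf\lambda$ is cofinal in the indexing (as $\cf\lambda$ is regular), whence $\sum_{r\in T}|F_r|=\lambda$. Applying $\CAP_{\cf\lambda}$ to $S$ produces $x$ with $|U\cap S|=\cf\lambda$ for every neighborhood $U$; taking $T=U\cap S$ gives $|\{\alpha:x_\alpha\in U\}|\ge\sum_{r\in T}|F_r|=\lambda$, so $x$ is a $\lambda$-accumulation point, as required.
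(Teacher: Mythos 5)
Your proposal is correct, and there is in fact nothing in the paper to compare it against: Proposition \ref{capcap} is stated there without proof, as a known fact (in the spirit of the ultrafilter/accumulation-point results the paper cites from Saks \cite{sa}). So your argument stands on its own, and it is complete. All the individual steps check out: the injective enumeration for $\CAP^*_\lambda\Rightarrow\CAP_\lambda$; the diagonal sequence avoiding initial segments of a cover for $\CAP^*_\lambda\Rightarrow[\lambda,\lambda]$-compactness (which indeed needs no regularity); the three-way fiber case analysis for $\CAP_\lambda\Rightarrow\CAP^*_\lambda$, where regularity exactly kills the case $|R|<\lambda$ with all fibers small; the tail-closure argument from the closed-set form of $[\lambda,\lambda]$-compactness, where regularity is used twice (to get the $<\lambda$-intersection property and to convert unboundedness of $\{\alpha : x_\alpha\in U\}$ into cardinality $\lambda$); and the weighted sequence with multiplicities $\mu_i$ for $\CAP^*_\lambda\Rightarrow\CAP_{\cf\lambda}$. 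The one genuinely delicate point is the converse in (b), and you identified and resolved it correctly: a complete accumulation point of an arbitrary $S\subseteq R$ of size $\cf\lambda$ could concentrate on small fibers, so you must choose representatives $r_j$ whose fiber sizes dominate a fixed increasing cofinal sequence $(\nu_j)_{j<\cf\lambda}$; then regularity of $\cf\lambda$ guarantees that \emph{every} $\cf\lambda$-sized subset of $S$ carries total fiber mass $\lambda$, which is exactly what is needed to upgrade $|U\cap S|=\cf\lambda$ to $|\{\alpha : x_\alpha\in U\}|=\lambda$. This is a clean, self-contained proof of a statement the paper leaves to the reader.
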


Notice that the space $\cf \lambda $, endowed with the order topology, 
does not satisfy $\CAP_{\cf\lambda} $, but it satisfies
$\CAP_{ \kappa } $, for every infinite cardinal 
$ \kappa \not = \cf \lambda $.

On the other hand, if $\mu$ is regular, and  $\mu \leq \lambda  $, 
 consider the space
$X= S _{\mu}( \lambda )  $,
endowed with the topology which has, as a base,
all the sets of the form
$ \{ y \in S _{\mu  }( \lambda ) \mid y \subseteq  x  \} $,
$x$ varying in $S _{\mu }( \lambda )$.   
$X$ 
satisfies
$\CAP_{ \kappa } $, for every infinite cardinal 
$ \kappa < \mu $. Indeed, 
if $ \kappa < \mu $, then every
$ \kappa $-indexed sequence 
$(x_ \beta ) _{ \beta \in \kappa } $
in $X$ 
converges to  
$\bigcup_{ \beta \in \kappa } x_ \beta   $.
On the contrary, if $ \lambda \geq \kappa \geq \mu$, then
$X$ fails to satisfy  $\CAP_{ \kappa } $.

Thus, if in the above example we take
$ \lambda $ singular, and 
$\mu= (\cf \lambda )^+$,
we get a space satisfying 
$\CAP_{ \cf \lambda } $
but not $\CAP_{ \lambda } $.

If $ \lambda $ is singular, and 
$2^{\cf \lambda }< \lambda$,
then it is not difficult to construct a Tychonoff topological space
$X$ which does not satisfy
 $\CAP_{ \lambda } $, but such that
every power of $X$ 
  satisfies $\CAP_{ \cf \lambda } $.

Notice that if $ \lambda $ is singular,
$ \CAP _{\cf \lambda } $ holds,
and there are arbitrarily large $ \kappa < \lambda $
such that $ \CAP_ \kappa $ holds, then
$ \CAP_ \lambda $ holds, hence, by Proposition \ref{capcap}(b),
also $ \CAP^*_ \lambda $ holds.
The proof of the above fact is similar to the proof of the proposition
in \cite[p. 94]{sa}.   

\begin{proposition}\label{localdec}
Suppose that $X$ is a topological space, $x \in X$,
$ \lambda $ is an infinite cardinal  and
$(x_\gamma )_{ \gamma\in \lambda} $ is a sequence of elements of $X$.

Then  $x$ is a $\lambda $-accumulation point
for $(x_\gamma )_{ \gamma\in \lambda} $  
if and only if there exists some ultrafilter $D$
uniform over $\lambda $ such that
$(x_\gamma )_{ \gamma\in \lambda} $ $D$-converges to x.
 \end{proposition}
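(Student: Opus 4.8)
The plan is to exploit the standard correspondence between accumulation points and ultrafilter limits, taking care to arrange that the witnessing ultrafilter is \emph{uniform}. For a neighborhood $U$ of $x$, write $A_U=\{\gamma<\lambda\mid x_\gamma\in U\}$, so that by definition $x$ is a $\lambda$-accumulation point for $(x_\gamma)_{\gamma\in\lambda}$ exactly when $|A_U|=\lambda$ for every such $U$, whereas $(x_\gamma)_{\gamma\in\lambda}$ $D$-converges to $x$ exactly when $A_U\in D$ for every such $U$.

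The direction from $D$-convergence to $\lambda$-accumulation is immediate: if $D$ is uniform over $\lambda$ and $A_U\in D$ for every neighborhood $U$ of $x$, then uniformity forces $|A_U|=\lambda$ for every such $U$, which is precisely the assertion that $x$ is a $\lambda$-accumulation point. So I would dispose of this half in one line.

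For the converse, the goal is to construct a uniform ultrafilter $D$ over $\lambda$ with $A_U\in D$ for all $U$. I would consider the family
\[
\mathcal{G}=\{A_U\mid U\text{ a neighborhood of }x\}\cup\{\lambda\setminus S\mid S\subseteq\lambda,\ |S|<\lambda\},
\]
and show that it has the finite intersection property; any ultrafilter $D$ extending $\mathcal{G}$ then does the job, since containing all the co-small sets $\lambda\setminus S$ makes $D$ uniform, while containing all the $A_U$ gives $D$-convergence to $x$. To verify the finite intersection property I would use two observations. First, neighborhoods are closed under finite intersection and $A_{U_1\cap\cdots\cap U_n}=A_{U_1}\cap\cdots\cap A_{U_n}$, so a finite intersection of sets of the first kind is again of the form $A_U$. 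Second, a finite union $S_1\cup\cdots\cup S_m$ of sets of cardinality $<\lambda$ again has cardinality $<\lambda$. Combining these, a typical finite intersection of members of $\mathcal{G}$ has the form $A_U\setminus(S_1\cup\cdots\cup S_m)$ with $|A_U|=\lambda$ (here the $\lambda$-accumulation hypothesis enters) and $|S_1\cup\cdots\cup S_m|<\lambda$, so it has cardinality $\lambda$ and in particular is nonempty. The ultrafilter theorem then furnishes the desired $D$.

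I do not expect a genuine obstacle here; the content lies entirely in correctly merging the trace on $\lambda$ of the neighborhood filter of $x$ with the co-small (uniform) filter, and the single place where the hypothesis is actually used is the estimate $|A_U|=\lambda$, which keeps the intersections large enough to survive the removal of a small set. The only mild subtlety is to build uniformity in from the start via the sets $\lambda\setminus S$, rather than hoping to recover it afterward.
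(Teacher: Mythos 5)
Your proof is correct, and it is precisely the standard argument this paper intends: the paper itself states Proposition~\ref{localdec} without proof, with Remark~\ref{rmkreg} pointing to well-known variations in Saks \cite[p.~80--81]{sa}. Your construction --- extending the family consisting of the traces $A_U$ together with the co-$<\lambda$ sets, after checking the finite intersection property via $A_{U_1}\cap\cdots\cap A_{U_n}=A_{U_1\cap\cdots\cap U_n}$ and the estimate $|A_U\setminus S|=\lambda$ --- is exactly the canonical one, and it also conforms to the paper's Remark~\ref{primeid}, since only the Prime Ideal Theorem (not full choice) is invoked.
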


\begin{remark}\label{rmkreg}
Variations on the above proposition are well known. See e.g.
\cite[p. 80-81]{sa}.

An analogue of the above proposition for 
$(\lambda', \lambda)$-regular ultrafilters 
and  $[\lambda, \lambda']$-compactness is proved in \cite{Cprepr,C}.

For $\lambda=\lambda '$ a regular cardinal,
the statement in 
Proposition  \ref{localdec} and the corresponding result
from \cite{Cprepr,C} essentially amount to the same result, since 
an ultrafilter is $(\lambda, \lambda )$-regular 
if and only if it has some quotient uniform over
$\lambda $, and since $\textrm{CAP}^*_\lambda $ is equivalent to  
$[\lambda,\lambda ]$-compactness.
 \end{remark} 

\begin{notations}\label{notseq}
Since we will often be working in products, dealing with sequences indexed by 
external sets, in order to avoid confusion
we shall introduce a special notation to denote the
elements of the product.

If $ x \in \prod_{ \beta \in \kappa } X_ \beta  $,
say $x=(x_\beta )_{\beta \in \kappa }$,
we shall sometimes denote $x$ by
$\prod_{\beta\in\kappa } x_\beta $.
 \end{notations}

The next lemma generalizes the fundamental property 
of $D$-convergence with respect to products.
Indeed, since every ultrafilter is $\omega $-complete,
the particular case $\nu=\omega $ of the next lemma asserts that
$D$-convergence in a Tychonoff product is equivalent to 
factor by factor $D$-convergence.
The lemma is more general in the sense that it asserts the above equivalence
for $\nu$-complete ultrafilters and 
$\Box^{<\nu}$ products.

\begin{lemma}\label{convprod}
Suppose that $D$ is a $\nu$-complete ultrafilter over some set $I$,
$(X_\beta )_{\beta\in\kappa }$ is a family of topological spaces,
and $(x_i)_{i \in I}$ is a sequence of elements of
$X=\Box^{<\nu}_{ \beta \in \kappa } X_ \beta $,
say, for each $i \in I$,
$x_i=\prod_{\beta \in \kappa} x_{i,\beta }$.

Then $(x_i)_{i \in I}$ $D$-converges in $X$ 
to some $x = \prod_{\beta \in \kappa} x_\beta \in X$ if and only if,
for each $\beta\in\kappa $,
$(x_{i,\beta })_{i \in I}$ $D$-converges to $x_\beta $ in $X_\beta $.
 \end{lemma}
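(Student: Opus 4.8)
The plan is to prove the two implications separately, in each case reducing to basic open neighborhoods. This reduction is justified because every neighborhood of a point contains a basic open neighborhood, and because $D$-convergence requires $\{ i \in I \mid x_i \in U \} \in D$ for every neighborhood $U$: if the index set for a smaller basic open set already lies in $D$, then so does the index set for any larger neighborhood, by upward closure of the filter $D$. Hence throughout it suffices to let $U$ range over basic open sets of the $\Box^{<\nu}$ topology.

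For the forward implication I would fix $\beta \in \kappa$ and an open neighborhood $V$ of $x_\beta$ in $X_\beta$, and form the cylinder $U = \prod_{\gamma \in \kappa} Y_\gamma$ with $Y_\beta = V$ and $Y_\gamma = X_\gamma$ otherwise. Since only one coordinate is constrained and $1 < \nu$, this $U$ is a basic open neighborhood of $x = \prod_\beta x_\beta$, so $\{ i \mid x_i \in U \} \in D$ by assumption; as $\{ i \mid x_i \in U \} \subseteq \{ i \mid x_{i,\beta} \in V \}$, the latter set lies in $D$, giving the coordinate-wise convergence. Note this direction needs only that $\nu$ is infinite, not the full strength of $\nu$-completeness.

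For the converse I would start from a basic open neighborhood $U = \prod_{\beta \in \kappa} Y_\beta$ of $x$, and set $S = \{ \beta \in \kappa \mid Y_\beta \neq X_\beta \}$, so that $|S| < \nu$ by the definition of the $\Box^{<\nu}$ topology. For each $\beta \in S$, the hypothesis that $(x_{i,\beta})_{i \in I}$ $D$-converges to $x_\beta$ gives $A_\beta = \{ i \mid x_{i,\beta} \in Y_\beta \} \in D$. Since the constraints for $\beta \notin S$ are vacuous, the membership $x_i \in U$ is equivalent to having $x_{i,\beta} \in Y_\beta$ for all $\beta \in S$, so $\{ i \mid x_i \in U \} = \bigcap_{\beta \in S} A_\beta$.

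The single nontrivial step --- and the reason the hypothesis on $D$ is present --- comes at the end of the converse: $\bigcap_{\beta \in S} A_\beta$ is an intersection of fewer than $\nu$ members of $D$, so $\nu$-completeness yields $\bigcap_{\beta \in S} A_\beta \in D$, hence $\{ i \mid x_i \in U \} \in D$, as required. I expect this to be the only real obstacle: it is precisely the matching between the bound $|S| < \nu$ built into the $\Box^{<\nu}$ topology and the $\nu$-completeness of $D$ that makes the product behave well, and without it the intersection over an infinite $S$ could fall outside $D$. Everything else is bookkeeping.
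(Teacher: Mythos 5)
Your proof is correct, and it is exactly the intended argument: the paper states Lemma \ref{convprod} without proof, treating it as the routine generalization of the standard fact that $D$-convergence in a Tychonoff product is coordinate-wise. Both directions are handled properly --- the reduction to basic open sets, the single-coordinate cylinder for the forward implication, and the key step where the bound $|S|<\nu$ on the support of a basic open set is matched against the $\nu$-completeness of $D$ --- so your write-up correctly fills in the omitted proof.
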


\begin{proposition}\label{lmprop}
Suppose that  $\kappa$ is a cardinal, 
and  $ \lambda $, $\nu$,
$(\mu_\beta )_{\beta\in\kappa }$ are infinite
cardinals. For each $\beta \in \kappa $,
endow $\mu_\beta $ with the order topology.
Suppose that $ (f_ \beta ) _{ \beta \in \kappa } $
is a given set of functions such that 
$f_\beta:\lambda\to \mu_\beta $,
for each $\beta \in \kappa $.

Consider the following conditions.

(a) There exists a $\nu$-complete
 ultrafilter  $D$ uniform over $\lambda $
such that for no $\beta\in \kappa $
$f_\beta (D)$ is uniform over $\mu_\beta $.

(b) In the space 
$\Box^{<\nu}_{ \beta \in \kappa } \mu_ \beta $
the sequence $(x_\gamma  )_{\gamma<\lambda } $
defined by $x_\gamma = \prod_{\beta \in \kappa}f_\beta (\gamma )$
has a $\lambda $-accumulation point
in $\Box^{<\nu}_{ \beta \in \kappa } \mu_ \beta $.
 
If for each $\beta\in\kappa $ $\mu_\beta $
is a regular cardinal, then Condition (a) implies Condition (b). 

If $\nu=\omega $ then Condition (b) implies Condition (a).
\end{proposition}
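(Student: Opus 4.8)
The plan is to push both implications through Proposition~\ref{localdec} and Lemma~\ref{convprod}, reducing the whole statement to one fact about the order topology on a cardinal. Write $x_\gamma=\prod_{\beta\in\kappa}f_\beta(\gamma)$, so the $\beta$-th coordinate of $x_\gamma$ is $f_\beta(\gamma)$. For an ultrafilter $D$ over $\lambda$ and a fixed $\beta$, the coordinate sequence $(f_\beta(\gamma))_{\gamma<\lambda}$ $D$-converges to a point $x_\beta\in\mu_\beta$ exactly when the image ultrafilter $f_\beta(D)$ converges to $x_\beta$ in the order topology, since $\{\gamma<\lambda\mid f_\beta(\gamma)\in U\}=f_\beta^{-1}(U)\in D$ if and only if $U\in f_\beta(D)$. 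The engine I need is the following auxiliary claim about an ultrafilter $E$ over an infinite cardinal $\mu$ with the order topology: (i) if $E$ converges to some point of $\mu$, then $E$ is not uniform over $\mu$; and (ii) if $\mu$ is regular and $E$ is not uniform over $\mu$, then $E$ converges to some point of $\mu$.

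First I would prove this claim. For (i), a point $x\in\mu$ is either isolated (when $x$ is $0$ or a successor), so that convergence forces $\{x\}\in E$, or a limit $\delta<\mu$, in which case the open neighbourhood $[0,\delta]=\{\xi\mid\xi\le\delta\}$ lies in $E$; either way $E$ contains a set of cardinality $<\mu$, so $E$ is not uniform, and this needs no regularity. For (ii), pick $A\in E$ with $|A|<\mu$; regularity of $\mu$ gives $\sup A<\mu$, hence $[0,\sup A]\in E$. This is a closed bounded initial segment, so it is compact in the order topology, and every ultrafilter on a compact Hausdorff space converges; thus $E$ converges to some $x\in[0,\sup A]\subseteq\mu$. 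Part (ii) is precisely where regularity is indispensable: for singular $\mu$ a member of $E$ of size $<\mu$ may be cofinal in $\mu$ and force no limit inside $\mu$.

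Granting the claim, both implications are short. For (a)$\Rightarrow$(b), assume each $\mu_\beta$ regular and take $D$ as in (a): $\nu$-complete, uniform over $\lambda$, with no $f_\beta(D)$ uniform. By part (ii) each $f_\beta(D)$ converges to some $x_\beta\in\mu_\beta$, i.e.\ each coordinate sequence $D$-converges to $x_\beta$; since $D$ is $\nu$-complete, Lemma~\ref{convprod} promotes this to $D$-convergence of $(x_\gamma)_{\gamma<\lambda}$ to $x=\prod_{\beta\in\kappa}x_\beta$ in $\Box^{<\nu}_{\beta\in\kappa}\mu_\beta$, and then Proposition~\ref{localdec} (using that $D$ is uniform over $\lambda$) yields that $x$ is a $\lambda$-accumulation point, which is (b). For (b)$\Rightarrow$(a), assume $\nu=\omega$ and (b); Proposition~\ref{localdec} supplies $D$ uniform over $\lambda$ with $(x_\gamma)_{\gamma<\lambda}$ $D$-converging to the accumulation point $x=\prod_{\beta\in\kappa}x_\beta$. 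Every ultrafilter is $\omega$-complete, so $D$ is $\nu$-complete, and the $\nu=\omega$ (Tychonoff) case of Lemma~\ref{convprod} gives coordinatewise convergence, so each $f_\beta(D)$ converges to $x_\beta\in\mu_\beta$; by part (i), which needs no regularity, no $f_\beta(D)$ is uniform, so $D$ witnesses (a).

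The one genuinely delicate point to nail down is the auxiliary claim, and within it part~(ii): it is the sole place where the asymmetry between the two hypotheses (regularity of the $\mu_\beta$ versus $\nu=\omega$) actually enters, everything else being bookkeeping with image ultrafilters and the two cited results. I would take care with the degenerate cases (finite or isolated limit point, principal $E$) so that ``not uniform'' is stated and applied correctly, and would make explicit that the limit point produced in (ii) really is an element of the space $\mu_\beta$, not of its one-point extension.
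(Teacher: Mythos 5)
Your proof is correct, and its skeleton is the same as the paper's: both directions are routed through Proposition~\ref{localdec} (to translate $\lambda$-accumulation points into $D$-convergence for some ultrafilter $D$ uniform over $\lambda$) and Lemma~\ref{convprod} (to pass between convergence in $\Box^{<\nu}_{\beta\in\kappa}\mu_\beta$ and coordinatewise convergence), with the coordinatewise statement being exactly the identification of $D$-limits of $(f_\beta(\gamma))_{\gamma<\lambda}$ with convergence of the image ultrafilter $f_\beta(D)$. Where you genuinely diverge is in the ``engine'': for (a)$\Rightarrow$(b) the paper observes that, $\mu_\beta$ being regular, non-uniformity of $f_\beta(D)$ implies that $f_\beta(D)$ is not $(\mu_\beta,\mu_\beta)$-regular, and then quotes Proposition~1 of \cite{topproc} to conclude that $\mu_\beta$ is $f_\beta(D)$-compact, so that the identity map $f_\beta(D)$-converges; you instead prove this from scratch, noting that a non-uniform ultrafilter must (by regularity of $\mu_\beta$) concentrate on a bounded interval $[0,\sup A]$, which is compact Hausdorff, hence the ultrafilter converges to a point inside $\mu_\beta$. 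Symmetrically, for (b)$\Rightarrow$(a) the paper asserts that convergence of $f_\beta(D)$ ``easily implies'' non-uniformity, while you supply the missing argument (the limit point has a neighborhood of cardinality $<\mu_\beta$, namely a singleton or a set of the form $[0,\delta]$). So your write-up is a self-contained, elementary replacement for the paper's citation: what the citation buys the paper is consistency with the regularity/$D$-compactness machinery used throughout the series, while your version makes the proposition independent of \cite{topproc} and displays exactly where regularity of $\mu_\beta$ is indispensable (boundedness of $\sup A$, which can fail for singular $\mu_\beta$) and where it is not (the direction (b)$\Rightarrow$(a)).
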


\begin{proof}
(a)$\Rightarrow $(b). Fix $\beta\in \kappa $.
Since $f_\beta (D)$ is not uniform over $\mu_\beta $, and
$\mu_\beta $ is a regular cardinal, then $f_\beta (D)$
is not $(\mu_\beta, \mu_\beta)$-regular.
By \cite[Proposition 1]{topproc}, $\mu_\beta $
is $f_\beta (D)$-compact, that is, every $f_\beta (D)$-sequence
in $\mu_\beta $ $f_\beta (D)$-converges.
In particular, the identity function on $f_\beta (\lambda )$
$f_\beta (D)$-converges to some point of $\mu_\beta $.
By definition of $f_\beta (D)$ and of ultrafilter convergence, 
this implies that the sequence
$ (f_\beta (\gamma ))_{\gamma< \lambda }$
$D$-converges in $\mu_\beta $
to some point, say, to $ x_ \beta $.

Since the above holds for every $\beta \in \kappa $,
and 
$(x_\gamma)_\beta  = f_\beta (\gamma )$, then,
by Lemma \ref{convprod}, 
the sequence $(x_\gamma  )_{\gamma<\lambda } $
 $D$-converges in $\Box^{<\nu}_{ \beta \in \kappa } \mu_ \beta $ to  
$x = \prod _{ \beta \in \kappa } x_ \beta  $.
By Proposition \ref{localdec}, $x$ is a
$\lambda $-accumulation point for $(x_\gamma  )_{\gamma<\lambda } $.

Now suppose that 
$\nu= \omega $. We are going to show that
(b) implies (a). Suppose
that (b) holds.
By Proposition 
\ref{localdec}
 there exists some ultrafilter $D$
uniform over $\lambda $ such that
 $(x_ \gamma)_{\gamma  < \lambda } $ $D$-converges 
to some point of $\prod_{ \beta \in \kappa } \mu_ \beta $.

By Lemma \ref{convprod}, for each $\beta\in \kappa $,
the sequence $((x_\gamma)_\beta)_{\gamma<\lambda }$
$D$-converges
to some point of $\mu_\beta $.
Since $ (x_\gamma)_\beta = f_\beta (\gamma )$, this implies
 that, for each $\beta\in \kappa $, the identity function on 
$ \mu_\beta $  $f_\beta (D)$-converges
to some point of $\mu_\beta $, and this easily implies that 
$f_\beta (D)$ is not uniform over $\mu_\beta $. 
\end{proof}

\begin{definition} \label{models}
Suppose that  $ \lambda $ is an infinite
 cardinal, and $ \m M$ is a model with a unary
predicate $U$ and a distinguished binary relation $<$ such that
$ \langle U^{\m M}, <^{\m M}\rangle = \langle \lambda , <\rangle $.
Suppose further that ${\m M}$ has
a name for every element of $U= \lambda $
(for simplicity, and by abuse of notation, we shall suppose that the name for 
$ \alpha $ is $ \alpha $ itself). 

If $ {\m N} \equiv {\m M} $, and $ b \in N$, we say that  
$ b $ is $ \lambda $-\emph{non standard}    
if and only if, in ${\m N}$, $U(b)$ holds,  and  $ \alpha < b$
holds for every $ \alpha < \lambda $.  
Other sentences which have been used to indicate similar notions are that
``$b$ bounds $ \lambda $'', or that 
``$b$ realizes $ \lambda $''.
Of course, in the particular case  $ \lambda = \omega $,
we get the usual notion of a non-standard element. 
In other words, 
${\m N}$ has a 
$ \lambda $-non standard element
if and only if 
$ \langle U ^{ {\m N} } , <^{\m N}\rangle$
is not an \emph{end extension} of 
$ \langle \lambda  , <\rangle$.

 We shall use the above terminology even when 
$U$ is not a predicate in the vocabulary of ${\m M}$,
but  just a unary relation definable by some formula.
In particular, if $\mu< \lambda $, and, as above, in ${\m M}$,
$ \langle U, <\rangle = \langle \lambda , <\rangle $,
 ${\m M}$ has
a name for every element of $ \lambda $, and $ {\m N} \equiv {\m M} $, we shall say that
$ b $ is $ \mu $-non standard in ${\m N}$    
if and only if, in ${\m N}$, $ b < \mu$ holds,  and  $ \alpha < b$
holds for every $ \alpha < \mu $.

Of course, it might be the case that the model ${\m M}$ 
has many predicates $U_i$  and relations $R_i$ such that
$ \langle U_i, R_i\rangle \cong \langle \lambda, <\rangle $.
If this is the case, and ${\m N} \equiv {\m M} $,
it might happen that ${\m N}$ has a $ \lambda $-non standard   
element according to, say, $U_0$, $R_0$, but no 
$ \lambda $-non standard   
element according to  $U_1$, $R_1$.
We shall try to adhere to the convention that
the base set of ${\m M}$ does always  contain the ``real''
$ \lambda $, so that our definition of 
$ \lambda $-non standard is not ambiguous.
Anyway, the above possible ambiguity is not a
serious problem, as far as the present paper is concerned,
since we always allow ${\m M}$ to be expanded with additional
functions and relations, so that we can always have, inside ${\m M}$,
isomorphisms between any $ \langle U_i, R_i\rangle$
and any $ \langle U_j, R_j\rangle$. 
Since being an isomorphism is expressible by first order 
sentences, and ${\m N}$  is supposed to be elementarily
equivalent to ${\m M}$, then a $ \lambda $-non standard element
exists in ${\m N}$ according to    $U_i $, $  R_i$ if and only if
a $ \lambda $-non standard element exists according to $U_j $, $  R_j$.
  \end{definition}  

As far as Condition (7) below is concerned, 
fix some set $V \subseteq S_ \lambda  ( \lambda )$
 cofinal in  $ S_ \lambda  ( \lambda )$
of cardinality $\leq \kappa $ and, for  $ v \in V$,
 let $ R_v$ be the unary predicate on 
$ \lambda $ defined by $  R_v( \gamma )$
if and only if $ \gamma \in v$
 (compare Part IV, Definition 2 and Remark 3). 

As far as Condition (8) below is concerned,
recall the definition of a $ \kappa $-$  ( \lambda )$-compact
 logic from Part IV, Definition 10.

\begin{theorem}\label{lmtopol} 
Suppose that  $\kappa$ is a cardinal, 
 $ \lambda $ is an infinite cardinal, and 
$(\mu_\beta )_{\beta\in\kappa }$ is a set of infinite
regular cardinals.

Then the following conditions are equivalent.
\begin{enumerate}

\item 
There are $\kappa $ functions $ (f_ \beta ) _{ \beta \in \kappa } $ 
such that 
\begin{enumerate}
\item
For each $\beta \in \kappa $,
$f_\beta:\lambda\to \mu_\beta $;
 and  
\item
for every  ultrafilter  $D$ uniform over $\lambda $
there is $\beta\in \kappa $ such that 
$f_\beta (D)$ is uniform over $\mu_\beta $.
\end{enumerate}

\item 
 There are $ \kappa $ functions $ (f_ \beta ) _{ \beta \in \kappa } $
such that 
\begin{enumerate}
\item
For each $\beta \in \kappa $,
$f_\beta:\lambda\to \mu_\beta $;
and
\item 
for every function $g \in \prod_{\beta\in \kappa } \mu_\beta  $
 there exists some finite
set $F \subseteq \kappa $ such that 
$ \left| \bigcap _{\beta \in F} f_\beta  ^{-1}([0, g(\beta ))) \right| < \lambda $.
\end{enumerate}

\item 
There is a family $ (B_{ \alpha , \beta }) _{ \beta \in \kappa, \alpha<\mu_\beta  }  $ 
of subsets of $ \lambda $ such that:
\begin{enumerate}
\item 
For every $ \beta \in \kappa$, $\bigcup _{ \alpha<\mu_\beta  } B_{ \alpha , \beta  } = \lambda$;
\item
For every $ \beta \in \kappa$ and $ \alpha \leq \alpha ' < \mu_\beta   $, 
$ B_{ \alpha , \beta } \subseteq B_{ \alpha' , \beta }$;
\item
For every function $ g \in \prod_{\beta\in \kappa } \mu_\beta  $ there exists a finite set
$F \subseteq \kappa  $ such that 
$|\bigcap _{\beta \in F} B_{ g( \beta) , \beta }| < \lambda $.   
\end{enumerate} 

\item 
For every family 
$(X_ \beta ) _{ \beta \in \kappa }$ of topological spaces,
if 
$\prod_{ \beta \in \kappa } X_ \beta $
satisfies $\CAP^*_\lambda $,
then there is $\beta  \in \kappa $
such that  $X_ \beta $ 
satisfies $\CAP^* _{\mu_ \beta }$.

\item 
The topological space $ \prod_{\beta\in\kappa }\mu_\beta $ does not
satisfies $\CAP^*_\lambda $, where each $ \mu_\beta $
is endowed with the  topology whose open sets are the intervals
$[0, \alpha )$ ($ \alpha \leq \mu_\beta $).

\item 
The topological space $ \prod_{\beta\in\kappa }\mu_\beta $ does not
satisfies $\CAP^*_\lambda $, where each $ \mu_\beta $
is endowed with the order topology.
\end{enumerate} 

If $\lambda \geq \mu_\beta $, for every $ \beta \in \kappa $, and $\kappa \geq \cf S_ \lambda  ( \lambda )$,
then the preceding conditions are also equivalent to the following ones:
\begin{enumerate} 

\item[(7)]
The model 
$  \langle  \lambda, <, R_v, \gamma  \rangle _{v \in V , \gamma < \lambda}   $
 has an expansion (equivalently, a multi-sorted expansion) ${\m A}$ in a language with at most $ \kappa $ new symbols 
(and  sorts) such that whenever
$\m B \equiv \m A$ and
$ \m B$ 
(respectively, $ \lambda ^{\m B} $) has 
 an element $x$ such that 
$ {\m B} \models \neg R_v( x) $ for every $ v \in V$,
then there exists $ \beta \in \kappa $ such that  
$ \m B$ has a $ \mu_ \beta $-non standard element. 

\item[(8)] 
Every  
$ \kappa $-$( \lambda )$-compact
 logic (equivalently,
every  
$ \kappa $-$( \lambda )$-compact
 logic
 generated by  
$ \sup _{ \beta \in \kappa } \mu_ \beta   $ 
cardinality quantifiers)
is 
$ \kappa $-$  ( \mu_ \beta , \mu_ \beta )$-compact
for some $ \beta \in \kappa $.
\end{enumerate} 

If in addition $\lambda $ is a regular cardinal,
then the preceding conditions are also equivalent to the following one:

\begin{enumerate} 
\item[(9)] The model $ \langle \lambda, <, \gamma \rangle _{ \gamma < \lambda }  $ has an
expansion (equivalently, a multi-sorted expansion) ${\m A}$ in a language with at most $ \kappa $ new symbols (and sorts) such that whenever
$\m B \equiv \m A$ and $ \m B$ 
(respectively, $ \lambda ^{\m B} $) has a
$ \lambda $-non standard element, then, for
some $ \beta \in \kappa $,  
$ \m B$ has a
$ \mu _ \beta  $-non standard element.
\end{enumerate}
\end{theorem}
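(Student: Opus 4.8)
The plan is to prove the block (1)--(6) equivalent first, by pure combinatorics together with Propositions~\ref{localdec}, \ref{convprod} and~\ref{lmprop}, and only then to graft on the model-theoretic and logical reformulations (7)--(9) through the standard ultrapower dictionary. For the combinatorial core I would start from the translation $B_{\alpha,\beta}:=f_\beta^{-1}([0,\alpha))$: under it, (2a) says each row exhausts $\lambda$ (giving (3a)), monotonicity (3b) is automatic, and (2b) is verbatim (3c); conversely $f_\beta(\gamma):=\min\{\alpha:\gamma\in B_{\alpha,\beta}\}$ recovers functions from a matrix, so (2)$\Leftrightarrow$(3) is immediate. For (1)$\Leftrightarrow$(2) I would argue by contraposition on each side, using that $f_\beta(D)$ is uniform over $\mu_\beta$ exactly when $f_\beta^{-1}([0,\alpha))\notin D$ for all $\alpha<\mu_\beta$: if (2b) fails for $(f_\beta)$, then the sets $f_\beta^{-1}([0,g(\beta)))$ together with all sets whose complement has size ${<}\lambda$ have the finite intersection property with intersections of size $\lambda$, so they extend to a uniform $D$ refuting (1); conversely a uniform $D$ with no $f_\beta(D)$ uniform gives, for each $\beta$, some $\alpha_\beta$ with $f_\beta^{-1}([0,\alpha_\beta))\in D$, and $g(\beta):=\alpha_\beta$ refutes (2b) since finite intersections of $D$-sets have size $\lambda$.

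For the topological layer, Proposition~\ref{lmprop} with $\nu=\omega$ applies in \emph{both} directions here, as every $\mu_\beta$ is regular: it states that for a fixed family $(f_\beta)$ there is a uniform $D$ with no $f_\beta(D)$ uniform if and only if the sequence $x_\gamma=\prod_\beta f_\beta(\gamma)$ has a $\lambda$-accumulation point in the order-topology product; negating the outer ``there is a family'' then yields (1)$\Leftrightarrow$(6). For (5) I would instead compute directly that in the lower topology a point $p$ is a $\lambda$-accumulation point iff every finite $\bigcap_{\beta\in F}f_\beta^{-1}([0,p_\beta])$ has size $\lambda$, which—since each $\mu_\beta$ is a limit—is precisely the negation of (2b), giving (5)$\Leftrightarrow$(2). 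For (4) I would prove the contrapositive of (1)$\Rightarrow$(4): if each factor fails $\CAP^*_{\mu_\beta}$, witnessed by $(y^\beta_\alpha)_{\alpha<\mu_\beta}$, then $z_\gamma:=\prod_\beta y^\beta_{f_\beta(\gamma)}$ has no $\lambda$-accumulation point, for such a point would give (Proposition~\ref{localdec}) a uniform $D$ with $z\to_D w$, hence (Lemma~\ref{convprod}) coordinatewise $(y^\beta_\alpha)\to_{f_\beta(D)}w_\beta$, and for the $\beta$ with $f_\beta(D)$ uniform this would make $w_\beta$ a $\mu_\beta$-accumulation point. Finally (4)$\Rightarrow$(6) by specializing $X_\beta=\mu_\beta$ (order topology), which fails $\CAP^*_{\mu_\beta}$ exactly as in the remark after Proposition~\ref{capcap}. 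This closes (1)--(6).

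For (7) and (9) I would set up the ultrapower dictionary. Adjoining to $\langle\lambda,<,\gamma\rangle$ the maps $f_\beta$, predicates $U_\beta$ for $\mu_\beta$, and (for~(7)) the $R_v$---at most $\kappa$ symbols, using $\lambda\ge\mu_\beta$ and $\kappa\ge\cf S_\lambda(\lambda)=|V|$---\L o\'s gives $\m A^\lambda/D\equiv\m A$ with $[\mathrm{id}]_D$ being $\lambda$-nonstandard (resp.\ outside every $v\in V$) iff $D$ is uniform, and $f_\beta([\mathrm{id}]_D)$ being $\mu_\beta$-nonstandard iff $f_\beta(D)$ is uniform over $\mu_\beta$. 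For (1)$\Rightarrow$(9) (resp.\ (7)) I would take $\m B\equiv\m A$ with a $\lambda$-nonstandard $x$; if no $f_\beta^{\m B}(x)$ is $\mu_\beta$-nonstandard then each $f_\beta^{\m B}(x)=\delta_\beta<\mu_\beta$ is standard, and with $g(\beta):=\delta_\beta+1$ the finite $F$ from (2) makes $S:=\bigcap_{\beta\in F}f_\beta^{-1}([0,g(\beta)))$ have size ${<}\lambda$. For regular $\lambda$ this $S$ is bounded by some $\eta<\lambda$, so $\m A\models\forall\gamma(\bigwedge_{\beta\in F}f_\beta(\gamma)<g(\beta)\to\gamma<\eta)$, and elementarity together with $x>\eta$ is the contradiction; for singular $\lambda$ the identical argument works with ``$\gamma<\eta$'' replaced by ``$R_v(\gamma)$'' for a $v\in V$ containing $S$, which is exactly why (7) uses the cofinal $V$ in place of~$<$. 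Condition (8) I would then obtain from the translation in Part~IV \cite{parti} between $\kappa$-$(\lambda)$-compact logics and these elementary-extension principles.

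The hard part will be the reverse passages (9)$\Rightarrow$(1) and (7)$\Rightarrow$(1): a uniform $D$ produces, through the hypothesized $\m A$, a $\mu_\beta$-nonstandard element of $\m A^\lambda/D$, but \emph{a priori} this witness is an arbitrary $[h]_D$ depending on $D$, whereas (1) demands a single $\kappa$-indexed family. My remedy is to first Skolemize $\m A$ (still $\le\kappa$ symbols, and the property survives, since reducts of $\m A^{\mathrm{Sk}}$-elementary extensions are $\m A$-elementary extensions), then pass to the Skolem hull of $[\mathrm{id}]_D$, where the witness is $t([\mathrm{id}]_D)$ for a term $t$ and hence equals $[h_t]_D$ for the definable $h_t(\gamma)=t^{\m A}(\gamma)$, which lands in $U_\beta=\mu_\beta$ with $h_t(D)$ uniform. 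There are only $\le\kappa$ such definable maps, and the delicate bookkeeping is to assemble them into one family $(f_\beta)$ with the prescribed codomains $\mu_\beta$: cleanest when $\mu_\beta>\kappa$, where $f_\beta$ may be taken as the pointwise supremum of all $\m A$-definable maps into $\mu_\beta$ (still below $\mu_\beta$ by regularity, and dominating each, hence uniform along every relevant $D$); the small-$\mu_\beta$ case and the exact alignment against $(\mu_\beta)_{\beta<\kappa}$ are where the hypotheses $\lambda\ge\mu_\beta$ and $\kappa\ge\cf S_\lambda(\lambda)$ are genuinely spent.
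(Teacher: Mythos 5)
Your treatment of the block (1)--(6) is correct and in fact more self-contained than the paper's own: the paper cites Part II, Theorem 1 for the combinatorial equivalence (1)--(3) and then runs (1)$\Rightarrow$(4)$\Rightarrow$(5)$\Rightarrow$(6)$\Rightarrow$(1) through Propositions \ref{localdec}, \ref{convprod} and \ref{lmprop} essentially as you do; your direct (1)$\Leftrightarrow$(6) using both directions of Proposition \ref{lmprop}, and your (2)$\Leftrightarrow$(5) by computing basic neighborhoods in the lower topology, are harmless reroutings of the same ideas. Your forward direction (1)$\Rightarrow$(7)/(9) is also sound, up to one slip worth repairing: from ``$f_\beta^{\m B}(x)$ is not $\mu_\beta$-non standard'' you cannot conclude $f_\beta^{\m B}(x)=\delta_\beta$ for a \emph{standard} $\delta_\beta$, only that $\m B \models f_\beta(x)\le \delta_\beta$ for some standard $\delta_\beta<\mu_\beta$; since your argument only uses the bound $f_\beta(x)<g(\beta)$ with $g(\beta):=\delta_\beta+1$, the fix is cosmetic. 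Deferring (8) to the Part IV translation matches the paper, which defers it to a future paper altogether.

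The genuine gap is exactly where you point, and it is larger than you suggest: (7)$\Rightarrow$(1) and (9)$\Rightarrow$(1). The Skolem-hull reduction to definable witnesses $[h_t]_D$ is the right first move, and it does produce a family of at most $\kappa$ functions, each mapping $\lambda$ into \emph{some} $\mu_\beta$, such that every uniform $D$ has some member with uniform image. But Condition (1) demands alignment with the given enumeration: the $\beta$-th function must map into the $\beta$-th cardinal, and several terms may compete for the same target. Your proposed remedy --- take $f_\beta$ to be the pointwise supremum of all definable maps into $\mu_\beta$ --- needs $\mu_\beta>\kappa$ so that regularity of $\mu_\beta$ keeps the supremum below $\mu_\beta$. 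That case is vacuous here: whenever (7)/(9) are in force, the hypotheses give $\kappa \ge \cf S_\lambda(\lambda)\ge \lambda \ge \mu_\beta$ for every $\beta$ (the middle inequality holds because any family $\{A_i : i<\rho\}$, $\rho<\lambda$, of sets of size $<\lambda$ can be diagonalized against: choose $\gamma_i \notin A_i \cup \{\gamma_j : j<i\}$, and then $\{\gamma_i : i<\rho\}\in S_\lambda(\lambda)$ is contained in no $A_i$). Hence $\mu_\beta\le\kappa$ for every $\beta$, the supremum trick is never available, and the ``small-$\mu_\beta$ case'' you set aside as delicate bookkeeping is the \emph{entire} reverse implication. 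The paper sidesteps this by citing Part IV, Theorem 7, where that alignment is actually carried out; as written, your proposal proves the equivalence of (1)--(6) and the implications (1)$\Rightarrow$(7), (1)$\Rightarrow$(9), but not their converses.
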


\begin{proof}
The equivalence of Conditions (1)-(3) is proved as in Part II, Theorem 1, 
equivalence of Conditions (b), (b$'$), (c).

(1)$\Rightarrow $(4). Suppose by contradiction that (1) holds and (4) fails.
Thus, there are topological spaces
$(X_ \beta ) _{ \beta \in \kappa }$ such that 
$X=\prod_{ \beta \in \kappa } X_ \beta $
satisfies $\CAP^*_\lambda $,
but, for every  $\beta  \in \kappa $,
 $X_ \beta $ fails to satisfy
 $\CAP^* _{\mu_\beta} $.

This means that for every $ \beta \in \kappa $ 
there exists a sequence $((y _\beta )_ \alpha ) _{ \alpha < \mu_ \beta } $
which has no $ \mu_ \beta $-accumulation
point in $ X_ \beta $.

Suppose that 
$ (f_ \beta ) _{ \beta \in \kappa } $ are functions as given by (1).
Define a $ \lambda $-indexed sequence
$ (x_ \gamma ) _{ \gamma < \lambda }  $ in
$X=\prod_{ \beta \in \kappa } X_ \beta $, 
as follows.
For $\gamma < \lambda $, 
$x_ \gamma = \prod _{ \beta \in \kappa } x _{ \gamma, \beta }   $
with $x _{ \gamma, \beta }= (y _{ \beta }) _{f _ \beta ( \gamma )}  $.

Since $X$ satisfies 
$\CAP^*_\lambda $,
the sequence 
$ (x_ \gamma ) _{ \gamma < \lambda }  $
has a $ \lambda $-accumulation point $x \in X$.
By Proposition \ref{localdec}, 
there exists an ultrafilter $D$ uniform over
$ \lambda $ such that  
$ (x_ \gamma ) _{ \gamma < \lambda }  $
$D$-converges to $x$.

By Lemma \ref{convprod}, 
for every $ \beta \in \kappa $, 
the sequence $(x _{ \gamma, \beta }) _{ \gamma < \lambda } $
$D$-converges in $X_ \beta $.
Since $x _{ \gamma, \beta }= (y _{ \beta }) _{f _ \beta ( \gamma )}  $,
we have that,  
for every $ \beta \in \kappa $, 
the sequence $((y _\beta )_ \alpha ) _{ \alpha < \mu_ \beta } $
$f _ \beta (D)$-converges to some point of  
$ X_ \beta $. 

By (1), there exists some $ \bar \beta \in \kappa $
such that  $f _ {\bar \beta} (D)$ is uniform over $ \mu _{\bar \beta} $, but then 
Proposition  \ref{localdec} implies that 
$((y _{\bar{\beta}} )_ \alpha ) _{ \alpha < \mu_ {\bar{\beta}}  } $
has a $ \mu _ {\bar{\beta} } $-accumulation point, contradiction.

(4)$\Rightarrow $(5) is trivial, since $\mu_ \beta$ does not satisfies 
$\CAP^* _{\mu_ \beta }   $, recalling the assumption that each
$\mu_ \beta$ is a regular cardinal.

(5)$\Rightarrow $(6) is trivial, since the topology in (6) is
finer than the topology in (5).

(6)$\Rightarrow $(1). If (6) holds, then there exists a $ \lambda $-indexed
sequence 
$(x_\gamma  )_{\gamma<\lambda } $
in $X$  
having no $\lambda $-accumulation point in $X$.
Say, $x_\gamma = \prod_{\beta \in \kappa} (x_ \gamma )_ \beta $.

For $ \beta \in \kappa $ and  $ \gamma < \lambda $, 
define $f_\beta (\gamma )=(x_ \gamma )_ \beta$.
The contrapositive of Proposition \ref{lmprop} (a)$\Rightarrow $(b)
then implies (1). 

The equivalence of (1),
(7) and (9) is proved as in Part IV, Theorem 7 
(Cf. also Part V, Theorem 1.2, Conditions (a), (d), (e)).

The equivalence of Condition (8) with the other conditions shall be presented elsewhere.
\end{proof}

\begin{proposition}\label{nuoprop}
Suppose that Condition (4) in Theorem \ref{lmtopol} holds. 
Then the following holds.
 
If
$(Y_ j ) _{ j \in J }$ is a family of topological spaces,
$\prod_{ j \in J } Y_ j $
satisfies $\CAP^*_\lambda $ and
if, for every  $\beta  \in \kappa $,
we put $J_ \beta = \{j \in J \mid Y_j \text{ does not  satisfy } \CAP^* _{\mu_ \beta } \} $,
then there is $\beta  \in \kappa $ such that 
$ | J_ \beta | \leq| \beta | $. 
 \end{proposition}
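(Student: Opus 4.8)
The plan is to argue by contradiction, converting the desired conclusion into the existence of a system of distinct representatives to which Condition (4) can be applied directly. So suppose, on the contrary, that $|J_\beta| > |\beta|$ holds for \emph{every} $\beta \in \kappa$. I shall manufacture a $\kappa$-indexed family $(X_\beta)_{\beta \in \kappa}$ of factors of $\prod_{j \in J} Y_j$, one drawn from each $J_\beta$ and all pairwise distinct, whose product still satisfies $\CAP^*_\lambda$ but in which no factor $X_\beta$ satisfies $\CAP^*_{\mu_\beta}$, thereby contradicting Condition (4). Throughout I assume, as is standard, that the $Y_j$ are nonempty: an empty factor satisfies every $\CAP^*_{\mu_\beta}$ vacuously, hence lies in no $J_\beta$, and its presence would merely trivialize the hypothesis that $\prod_{j\in J} Y_j$ satisfies $\CAP^*_\lambda$.

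First I would build the transversal. By transfinite recursion on $\beta \in \kappa$, choose $j(\beta) \in J_\beta \setminus \{ j(\beta') \mid \beta' < \beta \}$. This is possible because at stage $\beta$ the excluded set $\{ j(\beta') \mid \beta' < \beta \}$ has cardinality at most $|\beta|$, while the assumption $|J_\beta| > |\beta|$ guarantees that $J_\beta$ has strictly more than $|\beta|$ elements; hence the difference is nonempty at every stage (finite, successor, and limit alike). The resulting map $\beta \mapsto j(\beta)$ is injective and satisfies $j(\beta) \in J_\beta$ for every $\beta$. It is precisely here that the bound ``$\leq |\beta|$'' in the statement enters, matched against the number $|\beta|$ of predecessors of $\beta$.

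The topological input I need is that $\CAP^*_\lambda$ is inherited by subproducts, which follows from the fact that $\CAP^*_\lambda$ is preserved under continuous surjections. Indeed, if $f \colon X \to Z$ is continuous and onto and $X$ satisfies $\CAP^*_\lambda$, then, given a sequence $(z_\alpha)_{\alpha<\lambda}$ in $Z$, one lifts it to $(x_\alpha)_{\alpha<\lambda}$ with $f(x_\alpha) = z_\alpha$, takes a $\lambda$-accumulation point $x$ of the latter, and checks that $f(x)$ is a $\lambda$-accumulation point of $(z_\alpha)$: for each neighborhood $V$ of $f(x)$ the set $f^{-1}(V)$ is a neighborhood of $x$, and $x_\alpha \in f^{-1}(V)$ if and only if $z_\alpha \in V$, so $|\{\alpha \mid z_\alpha \in V\}| = \lambda$. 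Applying this to the projection $\prod_{j \in J} Y_j \to \prod_{\beta \in \kappa} Y_{j(\beta)}$, which is surjective since all factors are nonempty, I conclude that $\prod_{\beta \in \kappa} X_\beta$, with $X_\beta := Y_{j(\beta)}$, satisfies $\CAP^*_\lambda$.

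Now Condition (4) applies to the $\kappa$-indexed family $(X_\beta)_{\beta \in \kappa}$ and yields some $\bar\beta \in \kappa$ with $X_{\bar\beta} = Y_{j(\bar\beta)}$ satisfying $\CAP^*_{\mu_{\bar\beta}}$. But $j(\bar\beta) \in J_{\bar\beta}$ means exactly that $Y_{j(\bar\beta)}$ does \emph{not} satisfy $\CAP^*_{\mu_{\bar\beta}}$, which is the sought contradiction. I expect the only genuinely delicate points to be the cardinal bookkeeping in the recursion (verifying that the matched bound $|\beta|$ really leaves room at every stage) and the harmless but necessary nonemptiness reduction; the preservation of $\CAP^*_\lambda$ under projections is routine and is, for $\CAP^*$, the analogue of the familiar behaviour of compactness-type properties under continuous images.
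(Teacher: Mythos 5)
Your proof is correct and takes essentially the same route as the paper's: assume $|J_\beta|>|\beta|$ for all $\beta$, build an injective transversal $(j(\beta))_{\beta<\kappa}$ by transfinite recursion (exactly the cardinality count the paper invokes), observe that the subproduct $\prod_{\beta\in\kappa} Y_{j(\beta)}$ inherits $\CAP^*_\lambda$ from $\prod_{j\in J} Y_j$, and apply Condition (4) to reach a contradiction. The only difference is that you make explicit two points the paper treats as immediate --- the continuous-surjection argument for preservation of $\CAP^*_\lambda$ under projection onto a subproduct, and the nonemptiness convention this requires --- which is sound filling-in rather than a different approach.
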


\begin{proof}
Suppose that Condition (4) in Theorem \ref{lmtopol} holds,
 but the conclusion of Proposition \ref{nuoprop} fails.
Thus, there exists a product
$Y=\prod_{ j \in J } Y_ j $
satisfying $\CAP^*_\lambda $, but
such that $ |  J_ \beta | > |\beta |  $,
for every $\beta  \in \kappa $.
Hence we can inductively construct a sequence
$(j_ \beta ) _{ \beta  < \kappa } $ 
such that, 
for every $ \beta < \kappa $,
$Y _{j_ \beta } $ fails to satisfy  
$ \CAP^* _{\mu_ \beta }$ and, moreover, for
$ \beta < \beta' < \kappa $, 
$j_ \beta \not = j _{ \beta' } $.

For $\beta \in \kappa $, put $X_\beta =Y_{j_\beta }$. 
Hence, $\prod_{ \beta \in \kappa } X_{\beta }$
satisfies $\CAP^*_\lambda $, since we are taking the product of 
a subset of the factors of $Y$. But then 
(4) implies that some $X_\beta = Y_{j _{ \beta }}$
satisfies  $ \CAP^* _{\mu_ \beta }$, a contradiction.
\end{proof}

We have a partial version of Theorem \ref{lmtopol} for the case when 
the $ \mu_ \beta $'s are not necessarily  regular.

As far as Conditions (4)(5) below are concerned, 
for each $ \beta \in \kappa $, choose some set 
$V_ \beta  \subseteq S_{\mu_\beta } (\mu_\beta  )$
 cofinal in  $S_{\mu_\beta } (\mu_\beta  )$
of cardinality $\leq \kappa $ and, for  $ v \in V_ \beta $,
 let $ R_v$ be the unary predicate on 
$ \mu_\beta  $ defined by $  R_v( \alpha  )$
if and only if $ \alpha \in v$.

 \begin{thm}\label{lmkprod4gen} 
Suppose that  $\kappa$ is a cardinal, 
 $ \lambda $ is an infinite cardinal, and 
$(\mu_\beta )_{\beta\in\kappa }$ is a set of infinite
cardinals.
Then the following conditions are equivalent.
\begin{enumerate}   
\item 
There are $\kappa $ functions $ (f_ \beta ) _{ \beta \in \kappa } $ 
such that 
\begin{enumerate}
\item
For each $\beta \in \kappa $,
$f_\beta:\lambda\to \mu_\beta $;
 and  
\item
for every  ultrafilter  $D$ uniform over $\lambda $
there is $\beta\in \kappa $ such that 
$f_\beta (D)$ is uniform over $\mu_\beta $.
\end{enumerate}

\item 
 There are $ \kappa $ functions $ (f_ \beta ) _{ \beta \in \kappa } $
such that 
\begin{enumerate}
\item
For each $\beta \in \kappa $,
$f_\beta:\lambda\to \mu_\beta $;
and
\item 
for every function $g: \in \prod _{ \beta \in \kappa }  S _{\mu_ \beta } (\mu_ \beta )$
 there exists some finite
set $F \subseteq \kappa $ such that 
$ \left| \bigcap _{\beta \in F} f_\beta  ^{-1}(g(\beta )) \right| < \lambda $.
\end{enumerate}

\item
There is a family $ (C_{ \alpha , \beta }) _{  \beta \in \kappa, \alpha \in \mu _ \beta }  $ 
of subsets of $ \lambda $ such that: 

\begin{enumerate}   
\item 
For every $ \beta \in \kappa$, 
$ (C_{ \alpha , \beta }) _{ \alpha \in \mu _ \beta }  $
is a partition of $ \lambda $.
\item
For every function $g \in \prod _{ \beta \in \kappa } S _{\mu _ \beta } (\mu_ \beta ) $ there exists a finite subset
$F \subseteq \kappa  $ such that 
$|\bigcap _{\beta \in F} \bigcup _{ \alpha \in g( \beta )} C_{ \alpha  , \beta }| < \lambda $.
\end{enumerate}

\end{enumerate}   

If $\kappa \geq \cf S_ \lambda  ( \lambda )$, 
and $\lambda \geq \mu_\beta $, 
$ \kappa \geq \cf S _{ \mu_ \beta }(\mu _ \beta ) $, for every $ \beta \in \kappa $,  
then the preceding conditions are also equivalent to the following ones:

\begin{enumerate} 
\item[(4)]
The model 
$ \langle  \lambda, <, R_v, \gamma  \rangle _{v \in V \cup  V_\beta, \gamma < \lambda }   $
 has an expansion (equivalently, a multi-sorted expansion) ${\m A}$ in a language with at most $ \kappa $ new symbols 
(and sorts) such that whenever
$\m B \equiv \m A$ and  $ \lambda ^ {\m B} $ 
has an element $x$ such that 
$ {\m B} \models \neg R_v( x) $ for every $ v \in V$,
then there exists $ \beta \in \kappa $ such that  
$ \m B$ has a 
has an element $y$ in $ \mu _\beta ^ {\m B} $ such that 
$ {\m B} \models \neg R_v( y) $ for every $ v \in V_\beta $.

\end{enumerate} 

If in addition $\lambda $ is a regular cardinal,
then the preceding conditions are also equivalent to the following one:

\begin{enumerate} 
\item[(5)] 

The model 
$ \langle  \lambda, <, R_v, \gamma  \rangle _{v \in V_\beta, \gamma < \lambda }   $
 has an expansion (equivalently, a multi-sorted expansion) ${\m A}$ in a language with at most $ \kappa $ new symbols 
(and  sorts) such that whenever
$\m B \equiv \m A$ and  ${\m B} $ 
 has a
$ \lambda $-non standard element, 
then there exists $ \beta \in \kappa $ such that  
$ \m B$ has a 
has an element $y$ in $ \mu _\beta ^ {\m B} $ such that 
$ {\m B} \models \neg R_v( y) $ for every $ v \in V_\beta $.
\end{enumerate}
\end{thm}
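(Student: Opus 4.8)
The plan is to follow the proof of Theorem~\ref{lmtopol} along essentially the same lines, the one systematic change being that, since the cardinals $\mu_\beta$ need no longer be regular, everywhere an initial segment $[0,\alpha)$ of $\mu_\beta$ occurred one must now use an arbitrary member of $S_{\mu_\beta}(\mu_\beta)$, and correspondingly the order-theoretic notion of a $\mu_\beta$-non standard element must be replaced by that of an element $y\in\mu_\beta^{\m B}$ \emph{escaping} the cofinal family $V_\beta$, i.e.\ with $\m B\models\neg R_v(y)$ for every $v\in V_\beta$. Conditions (1)--(3) form a purely combinatorial core, established exactly as the equivalence of Conditions (b), (b$'$), (c) in Part~II, Theorem~1; Conditions (4) and (5) are the model-theoretic translations, obtained along the lines of Part~IV, Theorem~7.

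For the combinatorial core I would first treat (2)$\Leftrightarrow$(3) as a reformulation: given functions $(f_\beta)$, set $C_{\alpha,\beta}=f_\beta^{-1}(\{\alpha\})$, so that $(C_{\alpha,\beta})_{\alpha\in\mu_\beta}$ is the partition of $\lambda$ induced by $f_\beta$ and $f_\beta^{-1}(g(\beta))=\bigcup_{\alpha\in g(\beta)}C_{\alpha,\beta}$; the reverse passage recovers $f_\beta$ from the partition, and under these identifications (2)(b) and (3)(b) coincide. For (1)$\Leftrightarrow$(2) I argue by contraposition. If (2) fails for some $g\in\prod_{\beta\in\kappa}S_{\mu_\beta}(\mu_\beta)$, then the sets $f_\beta^{-1}(g(\beta))$ have all finite intersections of size $\lambda$, so together with the filter of co-$({<}\lambda)$ subsets of $\lambda$ they have the finite intersection property and extend to an ultrafilter $D$ uniform over $\lambda$; since $g(\beta)\in f_\beta(D)$ with $|g(\beta)|<\mu_\beta$, no $f_\beta(D)$ is uniform, against (1). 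Conversely, if $D$ is uniform and every $f_\beta(D)$ is non-uniform, pick $g(\beta)\in f_\beta(D)\cap S_{\mu_\beta}(\mu_\beta)$; then each $f_\beta^{-1}(g(\beta))\in D$ has size $\lambda$, and so does every finite intersection, against (2).

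The model-theoretic equivalences I would prove as the two implications (1)$\Rightarrow$(4) and (4)$\Rightarrow$(1), using the correspondence between uniform ultrafilters over $\lambda$ and elements escaping the predicates $R_v$, $v\in V$. For (1)$\Rightarrow$(4) I would let $\m A$ expand the given model by the sorts $\mu_\beta$ with their predicates $R_v$ ($v\in V_\beta$), the function symbols $f_\beta$, and Skolem functions; given $\m B\equiv\m A$ with $x\in\lambda^{\m B}$ escaping $V$, if \emph{no} $f_\beta^{\m B}(x)$ escaped $V_\beta$ we could choose $v_\beta\in V_\beta$ with $\m B\models R_{v_\beta}(f_\beta(x))$, apply Condition~(2) to $g=(v_\beta)_\beta$ to obtain a finite $F$ with $\bigcap_{\beta\in F}f_\beta^{-1}(v_\beta)$ of size $<\lambda$, hence contained in some $v\in V$, and then the finite first-order sentence $\forall\gamma\,(\bigwedge_{\beta\in F}R_{v_\beta}(f_\beta(\gamma))\to R_v(\gamma))$, true in $\m A$, would force $\m B\models R_v(x)$, contradicting that $x$ escapes $V$. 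For (4)$\Rightarrow$(1) I would, given $\m A$, form the ultrapower of $\m A$ by an arbitrary uniform $D$, pass to the Skolem hull $\m B$ of $x=[\mathrm{id}]_D$ together with the named constants, and take as the $f_\beta$ the unary Skolem terms of $\m A$ into the various $\mu_\beta$-sorts; here $x$ escapes $V$ (a small set is never in a uniform ultrafilter and $V$ is cofinal in $S_\lambda(\lambda)$), every element of $\mu_\beta^{\m B}$ is such a term evaluated at $x$, and the $V_\beta$-escaping element guaranteed by (4) reads off, by the fundamental theorem of ultraproducts, as a function $f_\beta$ with $f_\beta(D)$ uniform. Condition~(5), for regular $\lambda$, is the specialization in which escaping $V$ is replaced by order $\lambda$-non standardness: for regular $\lambda$ the members of $S_\lambda(\lambda)$ are bounded, so $V$ may be taken to consist of initial segments and the predicates $R_v$, $v\in V$, are dispensable in favour of $<$. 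Throughout, the hypotheses $\kappa\ge\cf S_\lambda(\lambda)$ and $\kappa\ge\cf S_{\mu_\beta}(\mu_\beta)$ are exactly what guarantee that $V$ and the $V_\beta$, hence the whole expansion, use at most $\kappa$ new symbols.

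The step I expect to be the main obstacle is the extraction in (4)$\Rightarrow$(1): one must produce genuine functions $f_\beta:\lambda\to\mu_\beta$, indexed to match the format of Condition~(1), out of the merely syntactic closure property of an abstract expansion $\m A$. Passing to the Skolem hull of a single uniform point $x$ is what makes every $\mu_\beta$-element a definable function of $x$ and hence recoverable as one of the $\le\kappa$ unary Skolem terms; but organizing these terms into a single function per index $\beta$ targeting $\mu_\beta$, and carrying the multi-sorted structure together with the cofinal families $V_\beta$ uniformly across the possibly singular $\mu_\beta$, is the delicate bookkeeping, and is precisely the point at which the replacement of ``$\mu_\beta$-non standard'' by ``escaping $V_\beta$'' must be reconciled with the combinatorial side, following the template of Part~IV, Theorem~7.
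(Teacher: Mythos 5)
Your proposal is correct and follows essentially the same route as the paper --- indeed the paper gives no proof of Theorem \ref{lmkprod4gen} at all, merely stating it as the singular-$\mu_\beta$ analogue of Theorem \ref{lmtopol}, with Conditions (1)--(3) implicitly deferred to Part II, Theorem 1 and Conditions (4)--(5) to Part IV, Theorem 7, which is exactly the template you reconstruct (and your worked-out parts, the function-by-function equivalence (1)$\Leftrightarrow$(2) via the co-small filter, the partition reformulation (2)$\Leftrightarrow$(3), and the sentence-transfer argument for (1)$\Rightarrow$(4), are sound). One reassurance on the bookkeeping you flag as the main obstacle: since $\cf S_\lambda(\lambda)\geq\lambda$ for every infinite $\lambda$ (given fewer than $\lambda$ sets of size $<\lambda$, pick one point outside each and collect them), the hypothesis $\kappa\geq\cf S_\lambda(\lambda)$ already forces $\kappa\geq\lambda$, and similarly $\kappa\geq\cf S_{\mu_\beta}(\mu_\beta)$ forces $\kappa\geq\mu_\beta$, so the Skolemized language, the $\lambda$ names included, has at most $\kappa$ unary terms and your extraction in (4)$\Rightarrow$(1) never exceeds the budget of $\kappa$ functions. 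What remains is only the reindexing of the extracted term-functions (indexed by pairs of a term and a target sort) into the one-function-per-index format literally demanded by Condition (1), and that residual step is precisely what both you and the paper defer to Part IV, Theorem 7.
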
 

\begin{remark} \label{ultrem}
Of course, there is the possibility of proving a mix between
Theorems \ref{lmtopol} and \ref{lmkprod4gen}, in the case when
certain $\mu_ \beta $'s are regular and
other $\mu_ \beta $'s are singular.

We leave details to the reader.
 \end{remark}

\begin{problems}\label{probfin}
(a) We do not know whether we can extend Theorem \ref{lmkprod4gen},
for the case when the $ \mu _ \beta $'s are singular,
by adding further equivalent conditions analogue to 
Conditions (4)-(6) and (8) in Theorem \ref{lmtopol}.

(b) Does $ \lambda \stackrel{ \lambda^+ }{\Rightarrow} \mu  $
implies 
 $ \lambda \stackrel{ 2^\lambda }{\Rightarrow} \mu  $?

(c) Find conditions equivalent to the conditions in Theorems
\ref{lmtopol} and \ref{lmkprod4gen} which are expressed in terms of Boolean Algebras. 
\end{problems}

\begin{remark}\label{primeid} 
In most cases, in our proofs, we are not using the full axiom of choice, but only the Prime Ideal Theorem. 
\end{remark}

\end{document}